\documentclass{amsart}
\usepackage{graphicx, amsmath, amsfonts, amssymb, hyperref, amsthm, comment, stmaryrd, cite}
\usepackage{algorithm2e}
\usepackage{listings}
\usepackage[shortlabels]{enumitem}
\theoremstyle{definition}
\newtheorem{thm}{Theorem}[section]
\newtheorem{prop}[thm]{Proposition}
\newtheorem{lem}[thm]{Lemma}
\newtheorem{dfn}[thm]{Definition}
\newtheorem{rmk}[thm]{Remark}
\newtheorem{exm}[thm]{Example}

\title{Ruelle's Zeta Function for non-Archimedean Rational Maps}

\author{Yunping Jiang and Chenxi Wu}

\keywords{Non-Archimedean Dynamics, Transfer Operators, Ruelle's zeta function, Nuclear Operators}
 
\subjclass[2020]{37P05; 37P20, 37D35, 37P10, 11S82, 37P40}

\begin{document}

\begin{abstract}
We studied the transfer operators defined over $\mathbb{C}_p$-valued analytic functions for subhyperbolic rational maps on $\mathbb{Q}_p$, and showed that the corresponding Ruelle's zeta functions are meromorphic on $\mathbb{C}_p$. We also used $\mathbb{R}$-valued transfer operators to study the shape of the corresponding Julia sets, and proved a Levin-Sodin-Yuditski type identity for general rational maps on $\mathbb{C}_p$. In all the results above, $\mathbb{Q}_p$ can be replaced with any non-Archimedean local field with characteristic $0$, and $\mathbb{C}_p$ the metric completion of its algebraic closure.
\end{abstract}

\maketitle

\section{Introduction}

The properties of Ruelle's zeta function of a complex quadratic polynomial map $f$ have been studied in~\cite{baladi2002dynamical}. In particular, under some mild conditions, one can express Ruelle's zeta function for weight $1/f'^2$ via the Levin-Sodin-Yuditski identity, which is related to the forward orbit of the critical points, and the zeta function is a ratio between entire functions when $f$ is quadratic like and subhyperbolic. 

To find non-Archimedean analogies of these results, we observe that the Levin-Sodin-Yuditski identity requires that the field involved be algebraically closed, so one should consider, for example, something like $\mathbb{C}_p$ instead of $\mathbb{Q}_p$; the study of subhyperbolic maps often need to assume the space involved to be locally compact, so one should consider, for example, something like $\mathbb{Q}_p$ and not $\mathbb{C}_p$. Analyticity of the weight function is an essential assumption for both, so one would have to look at functions that take values in the non-Archimedean field itself or its extensions, instead of $\mathbb{C}$-valued functions.

Following these observations, we have the following:

\begin{thm}[\protect{Analogy of \cite[Theorem A]{baladi2002dynamical}}]\label{lsy}
When $f$ is a polynomial map on $\mathbb{C}_p$ whose all critical points are non-degenerate and whose critical orbits on $\mathbb{P}^1(\mathbb{C}_p)$ are all disjoint. Then the following two formal power series are identical
\[\exp\left(-\sum_n\frac{z^n}{n}\sum_{x\in \hbox{Fix}(f^n)}\frac{1}{(f^n(x))'((f^n(x))'-1)}\right)\]
\[=\det\left(1-\left[\sum_n\frac{z^n}{((f^n)''(c_i))(f^n(c_i)-c_j)}\right]_{i, j}\right),\]
here $c_i$ are the critical points of $f$, and $\hbox{Fix}(f^{n})$ is the set of fixed points of $f^{n}$. In particular, when both the left-hand side and the right-hand side are well-defined, the above equality holds.  
\end{thm}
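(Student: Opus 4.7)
The plan is to realize the left-hand side as the Fredholm determinant $\det(1-z\mathcal{L})$ of the transfer operator $\mathcal{L}$ with weight $1/f'(y)^2$ acting on an appropriate $\mathbb{C}_p$-Banach space of locally analytic functions on a neighborhood of the Julia set, and then to reduce this determinant to the finite-dimensional form on the right via a residue calculation. The identification of the left-hand side with $\det(1-z\mathcal{L})$ is immediate from the non-Archimedean trace formula established earlier in the paper, which gives $\mathrm{tr}(\mathcal{L}^n) = \sum_{x \in \mathrm{Fix}(f^n)} 1/((f^n)'(x)((f^n)'(x)-1))$.

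For the reduction, the crucial step is to compute $\mathcal{L}$ on Cauchy kernels $g_a(x) = 1/(x-a)$. Writing
\[
(\mathcal{L}g_a)(x) = \sum_{f(y)=x} \mathrm{Res}_y \frac{1}{f'(y)(y-a)(f(y)-x)}
\]
and applying the residue theorem for rational functions on $\mathbb{P}^1(\mathbb{C}_p)$ (a purely algebraic identity that transfers to the non-Archimedean setting), the preimage sum equals minus the residues at the remaining poles $y=a$, $y=c_k$, and $y=\infty$. The residue at $\infty$ vanishes because $f$ is a polynomial of degree $\geq 2$; the pole at each $c_k$ is simple by non-degeneracy; explicit calculation gives
\[
(\mathcal{L}g_a)(x) = \frac{1}{f'(a)} g_{f(a)}(x) + \sum_k \frac{1}{f''(c_k)(c_k-a)} g_{f(c_k)}(x),
\]
which exhibits $\mathcal{L}=\mathcal{L}_0+K$, where $\mathcal{L}_0 g_a = g_{f(a)}/f'(a)$ is a dynamical shift and $K\psi = \sum_k \psi(c_k) g_{f(c_k)}/f''(c_k)$ has rank at most $N$ (the number of critical points). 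The factorization $\det(1-z\mathcal{L}) = \det(1-z\mathcal{L}_0)\det(1-z(1-z\mathcal{L}_0)^{-1}K)$, together with Sylvester's identity for finite-rank perturbations, then reduces the problem to an $N\times N$ determinant indexed by critical points, with the shift factor being absorbed. Using the geometric-series identity $\mathcal{L}_0^m g_{f(c_j)} = g_{f^{m+1}(c_j)}/(f^m)'(f(c_j))$ and pairing with the functional $\psi \mapsto \psi(c_i)/f''(c_i)$, the chain-rule expansion $(f^n)''(c_i) = f''(c_i)\prod_{k=1}^{n-1} f'(f^k(c_i))$ matches the resulting matrix entries to those of the right-hand side.

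The main obstacle is functional-analytic: rigorously establishing nuclearity of $\mathcal{L}$ on a concrete $\mathbb{C}_p$-Banach space, justifying the determinant factorization and Sylvester reduction, and verifying that $\det(1-z\mathcal{L}_0)$ contributes trivially (equivalently, that the shift has no net Lefschetz contribution separate from the critical data on the chosen function space). The Serre--Monna theory of non-Archimedean nuclear operators supplies the general framework, but matching it with the explicit dynamical setup and pinning down the spectral structure of the shift is delicate. The disjointness hypothesis on critical orbits is essential so that the $g_{f^n(c_k)}$ are pairwise distinct and the finite-rank reduction is non-degenerate, while non-degeneracy of each $c_k$ is what ensures the residues at the critical points are simple, which is what produces the clean closed form of the matrix entries.
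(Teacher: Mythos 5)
Your proposal takes a genuinely different route from the paper, and the route has gaps that would need substantial work to close.

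The paper's own proof is purely algebraic and avoids non-Archimedean functional analysis entirely: it observes that the $z^n$ coefficient on each side of the asserted identity is a rational function over $\mathbb{Q}$ of the coefficients $a_1,\dots,a_N$ of $f$ (on the left because $\sum_{x\in\mathrm{Fix}(f^n)}1/((f^n)'(x)((f^n)'(x)-1))$ is a symmetric rational function of the roots of $f^n(x)=x$, hence expressible in the elementary symmetric functions, i.e.\ in the $a_j$; on the right by direct inspection), and these rational functions do not depend on the ambient field. Since the identity of rational functions is known over $\mathbb{C}$ on a nonempty open, hence Zariski-dense, set of coefficients by Levin--Sodin--Yuditski / Baladi--Jiang--Ruelle, the rational functions coincide, and therefore so do the coefficients over $\mathbb{C}_p$. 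This is why the theorem is stated, and can only be stated, as an identity of \emph{formal} power series with the proviso ``when both sides are well-defined.''

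Your proposal instead tries to redo the full transfer-operator argument over $\mathbb{C}_p$. The residue computation giving $\mathcal{L}g_a = g_{f(a)}/f'(a) + \sum_k g_{f(c_k)}/(f''(c_k)(c_k-a))$ is correct as an algebraic identity of rational functions, and the decomposition $\mathcal{L}=\mathcal{L}_0+K$ with $K$ of rank $\le N$ is the right structural observation (it is essentially the Archimedean strategy). However, three steps are not justified and I do not see how to fill them in cheaply. First, you appeal to ``the non-Archimedean trace formula established earlier in the paper'' to get $\mathrm{tr}(\mathcal{L}^n)=\sum_{x\in\mathrm{Fix}(f^n)} 1/((f^n)'(x)((f^n)'(x)-1))$, but the paper's Theorem~\ref{zetadet} is proved only for hyperbolic maps on $\mathbb{Q}_p$, uses the Markov coding furnished by local compactness, and the sum there runs only over $\mathrm{Fix}(f^n)\cap J(f)$; your left-hand side sums over \emph{all} fixed points in $\mathbb{C}_p$, and you have not exhibited a concrete $\mathbb{C}_p$-Banach space of functions on which $\mathcal{L}$ is nuclear with that particular trace. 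Second, the determinant factorization $\det(1-z\mathcal{L})=\det(1-z\mathcal{L}_0)\det(1-z(1-z\mathcal{L}_0)^{-1}K)$ and the claim that the shift factor $\det(1-z\mathcal{L}_0)$ ``is absorbed'' or ``contributes trivially'' is exactly the delicate point in the Archimedean proof and depends on the precise function space (Hardy-type spaces on the complement of the filled Julia set); you flag this as delicate but do not resolve it, and over $\mathbb{C}_p$ no such space is available off the shelf. Third, even granting all of that, you would have proved an analytic identity under additional hypotheses (nuclearity, invertibility of $1-z\mathcal{L}_0$, domain of convergence), whereas the theorem to be proved is an identity of formal power series valid for all polynomials with the stated critical-orbit hypotheses; you would still need a separate density or specialization argument to pass from analytic to formal, at which point the paper's algebraic-transfer approach is both shorter and strictly more general.
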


\begin{thm}[\protect{Analogy of \cite[Theorem B]{baladi2002dynamical}}]\label{main2}
Let $f$ be a subhyperbolic rational map on $\mathbb{P}^1(\mathbb{Q}_p)$. The set of all repelling fixed points of $f^{n}$ is $\hbox{Fix}(f^n)\cap J(f)$.  Suppose $\beta>0$ is a real number and $\psi$ is a bounded and locally analytic function. Here, by ``locally analytic'' we mean that at any point $x\in J(f)$, there is a neighborhood on which $\psi$ can be written as a convergent power series with coefficients in $\mathbb{C}_p$). Then
\[\exp\left(\sum_{n=1}^\infty \left(\sum_{x\in \hbox{Fix}(f^n)\cap J(f)}\frac{\prod_{i=0}^{n-1}\psi(f^i(x))}{ ((f^n)'(x))^\beta(1-((f^n)'(x))^{-1})}\right)\frac{z^n}{n}\right)\]
is a quotient between two entire functions over $\mathbb{C}_p$.
\end{thm}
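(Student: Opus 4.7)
The plan is to realize the expression inside the exponential as a sum of traces of iterates of a nuclear transfer operator on a space of locally analytic $\mathbb{C}_p$-valued functions, and then conclude via the standard identity $\exp\bigl(\sum_n\operatorname{tr}(L^n)z^n/n\bigr)=1/\det(1-zL)$, with the right-hand side a quotient of entire functions because $\det(1-zL)$ is entire for nuclear $L$.

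First I would use the subhyperbolic hypothesis to produce a finite clopen Markov partition $\{U_1,\dots,U_N\}$ of a neighbourhood of $J(f)$ by disks in $\mathbb{Q}_p$ on which $|f'|$ is locally constant and bounded below by some $\lambda>1$, and on each of which $f$ is an analytic bijection onto a union of other $U_j$'s. Subhyperbolicity simultaneously forces every periodic point in $J(f)$ to be repelling, giving the first sentence of the theorem. Interpreting $(f')^{-\beta}$ as the locally constant function $|f'|^{-\beta}$ (the natural reading when $\beta$ is real), I would define
\[L\phi(x)=\sum_{f(y)=x,\,y\in\bigsqcup U_i}\psi(y)\,(f'(y))^{-\beta}\phi(y)\]
as a bounded operator on the non-Archimedean Banach space $\mathcal{A}$ of $\mathbb{C}_p$-valued locally analytic functions on $\bigsqcup U_i$, and verify by Serre's theory of completely continuous operators that $L$ is nuclear: each local inverse branch $g_i$ contracts some $U_j$ strictly inside $U_i$ by factor $\lambda^{-1}$, so $\phi\mapsto\phi\circ g_i$ factors through the restriction to a strictly larger disk and is nuclear, and multiplication by the bounded locally constant weight preserves this property.

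With $L$ in hand, the non-Archimedean Atiyah--Bott trace formula developed earlier in the paper computes
\[\operatorname{tr}(L^n)=\sum_{x\in\operatorname{Fix}(f^n)\cap J(f)}\frac{\prod_{i=0}^{n-1}\psi(f^i(x))}{(f^n)'(x)^\beta\bigl(1-(f^n)'(x)^{-1}\bigr)},\]
the denominator $1-(f^n)'(x)^{-1}$ arising as the Atiyah--Bott contribution at the unique contracting fixed point of the relevant composition of inverse branches (which is well-defined exactly because $x$ is repelling). Summing $z^n/n$ and exponentiating yields $1/\det(1-zL)$, exhibiting the sought expression as a quotient of two entire functions on $\mathbb{C}_p$. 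The main obstacle will be technical rather than conceptual: the Markov partition must be chosen fine enough that $|f'|^{-\beta}$ is genuinely locally constant on every piece and that every inverse branch strictly contracts some $U_j$ inside some $U_i$ (so that the composition operators are actually nuclear rather than merely bounded), and the coupling of the $\mathbb{R}$-valued factor $|f'|^{-\beta}$ with the $\mathbb{C}_p$-valued analytic structure must be justified in the same framework used earlier in the paper for $\mathbb{R}$-valued transfer operators.
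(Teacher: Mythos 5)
Your general strategy — realize the sum as $\sum_n\operatorname{tr}(\mathcal{L}^n)z^n/n$ for a nuclear transfer operator $\mathcal{L}$, then invoke $\exp(\cdot)=1/\det(I-z\mathcal{L})$ with $\det(I-z\mathcal{L})$ entire — is indeed the paper's framework, and in the \emph{hyperbolic} case your outline is essentially the paper's proof. But the theorem is stated for \emph{subhyperbolic} maps, and your proposal does not engage with what makes that case harder. There are three genuine gaps.

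First, you posit a finite Markov partition of a neighborhood of $J(f)$ by disks $U_1,\dots,U_N$ on which $f$ is an analytic bijection onto a union of other pieces. That is the hyperbolic picture. For subhyperbolic $f$, Proposition~\ref{markov} gives a Markov decomposition in which some blocks are \emph{infinite}: they are forward orbits of a disk under a scaling map $T$ fixing a point on the forward orbit of a critical point, and so consist of infinitely many disks shrinking into that fixed point. There is no finite clopen partition of $J(f)$ by disks on which every branch of $f$ is an analytic bijection with derivative bounded below. The paper's fix is the \emph{dynamical charts}: near each critical periodic orbit one passes to a degree-$d_l$ cover (centered at the original critical point, not its forward image) on which the dynamics becomes a genuine scaling, and only then is the transfer operator analytic and $0$-nuclear. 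Without this step your claim that ``each local inverse branch $g_i$ contracts $U_j$ strictly inside $U_i$'' fails near the critical orbit.

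Second, you read $(f')^{-\beta}$ as $|f'|^{-\beta}$. That is an $\mathbb{R}$-valued quantity, whereas the operator in this section acts on $\mathbb{C}_p$-valued analytic functions, and the numerator $\prod\psi(f^i(x))$ and the factor $1-((f^n)'(x))^{-1}$ are $\mathbb{C}_p$-valued; mixing in $|\cdot|^{-\beta}$ produces an ill-typed expression. The paper uses the $\mathbb{C}_p$-valued weight $\psi(y)(g'(y))^{-\beta}$ where $g$ is $f$ written in dynamical coordinates; the $\beta$-th power is only made sense of through those charts (where $f^{n_l}$ conjugates to a pure scaling and one can extract $d_l$-th roots of derivatives). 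The $\mathbb{R}$-valued weight $|f'|^{-\beta}$ belongs to the Hausdorff-dimension discussion in Section~5, not here.

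Third — and this is why the theorem says ``quotient between two entire functions'' rather than ``reciprocal of an entire function'' — even after the dynamical charts are in place, $\operatorname{tr}(\mathcal{L}_\beta^n)$ does \emph{not} equal $\sum_{x\in\operatorname{Fix}(f^n)\cap J(f)}\prod\psi(f^i(x))\,((f^n)'(x))^{-\beta}/(1-((f^n)'(x))^{-1})$. Admissible sequences through infinite blocks with all degree-$1$ transitions correspond to periodic points in the forward orbit of critical points, and these are counted with the wrong multiplicity and with $(g')$ (a $d_l$-th root of $(f')$) in place of $(f')$. The paper isolates this discrepancy as the explicit correction factor in Equation~\eqref{eqsubhyp}, indexed by the finite sets $\mathcal{P},\mathcal{P}'$ of such orbits, and shows separately (by an Euler-product manipulation in the spirit of \cite[Eq.~(1.5)]{baladi2002dynamical}) that this correction is itself a ratio of entire functions. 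Your proof skips this entirely; as written it would give equality of the zeta function with $1/\det(I-z\mathcal{L})$, which is false in the subhyperbolic case.
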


The expression in Theorem \ref{main2} is convergent only in a disk of radius $r\geq 0$ centered at $0$. As in the study of the Riemann zeta function, a fundamental problem in the study of dynamical zeta functions is whether this expression admits an analytic continuation to a transcendental meromorphic function on ${\mathbb C}$ or ${\mathbb C}_{p}$.  

For a Markov analytic dynamical system in the
$\mathbb{C}$ setting, Ruelle \cite{ruelle2002dynamical} proved, based on the nuclear operator theory developed by Grothendieck, that the expression admits an analytic continuation to a quotient of  two entire functions on ${\mathbb C}$. For a post-critically finite quadratic polynomial in the $\mathbb{C}$ setting, the authors of \cite{baladi2002dynamical} constructed a weak Markov partition based on the Yoccoz puzzle technique and, again using Grothendieck’s nuclear operator theory, showed that the expression extends analytically to a quotient of entire function on ${\mathbb C}$. It is natural to ask whether a similar result holds in the non-Archimedean setting. A key result proved in \cite{hsia2025zeta} (see Proposition \ref{markov}) provides an opportunity to address this question. Proposition \ref{markov} states that for a hyperbolic rational map on $\mathbb{Q}_p$, there exists a Markov partition, and furthermore, for a subhyperbolic rational map on $\mathbb{Q}_p$, there exists a weak Markov partition. 

The main point of Theorem \ref{main2} is that Grothendieck’s nuclear operator theory also works in the non-Archimedean setting and which combines the ideas of \cite{baladi2002dynamical} and \cite{hsia2025zeta}, the expression admits an analytic continuation to an quotient of two entire functions on $\mathbb{C}_p$.

These results are stated for $\mathbb{Q}_p$ or $\mathbb{C}_p$ just for convenience. All results for $\mathbb{Q}_p$ can be generalized to any non-Archimedean locally compact field with characteristic $0$ (for example, finite extensions of $\mathbb{Q}_p$),  while all results for $\mathbb{C}_p$ can be generalized to any non-Archimedean field which is algebraically closed.

On the other hand, the classical Ruelle's zeta function for $\mathbb{C}$-valued functions would be needed for the calculation of the Hausdorff dimension of the Julia set, and due to the nature of non-Archimedean geometry such calculation would be much simpler than the Archimedean case, which we will outline towards the end of the paper. 

In Section 2, we will introduce notation and provide some background for the main results. In Section 3, we will prove Theorem~\ref{lsy}, which primarily uses the fact that $\mathbb{C}_p$ is algebraically closed. In Section 4, we will study Ruelle's zeta function for subhyperbolic rational maps on $\mathbb{Q}_p$ and prove Theorem~\ref{main2}, by utilizing the Markov coding in~\cite{hsia2025zeta} and results from non-Archimedean functional analysis. And lastly, in Section 5, we will calculate the Hausdorff dimension for the Julia sets of subhyperbolic rational maps on $\mathbb{Q}_p$ (Theorem \ref{dim}), via the Markov coding in~\cite{hsia2025zeta}.

\medskip
\medskip
\noindent {\bf Acknowledgments.} The work is partially supported by PSC-CUNY awards, a PSC-CUNY enhanced award (66680-54), and Simons Foundation awards (523341, 942077, 850685). 

\medskip
\medskip

\section{Notification and Background}

Let $p$ be a prime number, $f$ be a rational map on $\mathbb{Q}_p$ (or $\mathbb{C}_p$), then $f$ defines a holomorphic self map on $\mathbb{P}^1(\mathbb{Q}_p)$ (or $\mathbb{P}^1(\mathbb{C}_p)$). We can define the concept of {\em Fatou sets} $F(f)$ and {\em Julia sets} $J(f)$ similar to the Archimedean setting, as the domain of equicontinuity and its complement, see \cite{benedetto2019dynamics}.

Following \cite{hsia2025zeta}, we have the following definition:

\begin{dfn}
We say $f$ is {\em hyperbolic} if there are no critical points of $f$ in $J(f)$. We say $f$ is {\em subhyperbolic} if all critical points of $f$ which are in $J(f)$ has finite forward orbit.
\end{dfn}

\begin{rmk}
A rational map $f$ is said to be post-critically finite (PCF) if the union of the forward orbits of its critical points is finite. Unlike the Archimedean setting, in the non-Archimedean setting the Julia set of a PCF rational map is always trivial. However, the Julia set of a subhyperbolic rational map in the non-Archimedean setting can be much more complicated and is therefore worthy of further investigation.
\end{rmk}

\begin{exm}
    An example of a subhyperbolic rational map is $z\mapsto \frac{z(z-1)^2}{3}$ on $\mathbb{Q}_3$. More examples and the description of their dynamics can be found in \cite{hsia2025zeta}.
\end{exm}

The following proposition proved in \cite{hsia2025zeta} becomes a key result enabling us to do further investigation in this paper.  The reader who is interested in the proof of this proposition can refer to \cite[pages 9-23]{hsia2025zeta}.

\begin{prop}[{\cite[Proposition 3.20]{hsia2025zeta}}]\label{markov}
Let $f$ be a rational map on $\mathbb{Q}_p$.
\begin{enumerate}
\item When $f$ is hyperbolic, $J(f)$ admits a finite Markov decomposition, where each Markov block is the intersection of $J(f)$ with a disc.
\item When $f$ is subhyperbolic, $J(f)$ admits a finite Markov decomposition where each Markov block is either the intersection of $J(f)$ with a disc (called {\rm a finite block}), or the forward orbit of the intersection of $J(f)$ with a disc under a scaling map that fixes a point in the forward orbit of a critical point (called {\rm an infinite block}).
\end{enumerate}
\end{prop}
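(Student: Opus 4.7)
The plan is to exploit two defining features of non-Archimedean analysis: any two closed discs in $\mathbb{P}^1(\mathbb{Q}_p)$ are either disjoint or nested, and a rational map without critical points on a disc carries it bijectively onto another disc by a pure scaling on radii. Together these facts mean that once $J(f)$ is covered by finitely many discs on which $f$ is well-behaved, the ultrametric dichotomy alone already forces strong compatibility, and a finite combinatorial refinement yields a genuine Markov partition.

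For the hyperbolic case, I would first use the compactness of $J(f) \subset \mathbb{P}^1(\mathbb{Q}_p)$ and the absence of critical points in $J(f)$ to choose a finite cover by closed discs $D_1,\ldots,D_N$ such that each $D_i$ is critical-point-free, $f|_{D_i}$ is a scaling onto $f(D_i)$, and the expansion factor is uniformly bounded below by some $\lambda > 1$. The ultrametric dichotomy guarantees that each pair among the $D_j$ and the images $f(D_i)$ is either disjoint or nested, so the only possible failure of the Markov property is that $f(D_i) \cap J(f)$ might not be covered by those $D_j$ contained in $f(D_i)$. I would fix this by adding further sub-discs of $f(D_i)$ meeting $J(f)$; the uniform expansion together with compactness of $J(f)$ ensures that this refinement terminates after finitely many steps, yielding the required finite Markov decomposition whose blocks are the $D_i \cap J(f)$.

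For the subhyperbolic case, the critical points in $J(f)$ have finite forward orbits and therefore eventually land on repelling periodic cycles in $J(f)$, since $J(f)$ contains no attracting cycles. Around each point of such a post-critical periodic cycle, I would select a small closed disc on which some iterate $f^n$ (with $n$ the period) acts as a scaling fixing the periodic point, and take the infinite block at that point to be the forward orbit of the disc intersected with $J(f)$ under this scaling map. Outside small open neighborhoods of these cycles, $J(f)$ is compact and critical-point-free, so the hyperbolic-case construction produces finite blocks covering the rest, and a final adjustment step matches them to the boundaries of the infinite blocks.

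The main obstacle will be making the interface between the finite and infinite blocks genuinely Markov. On the infinite-block side, the local geometry is governed by infinitely many nested sub-discs arising from iterates of a single scaling map, so one must verify that the image under $f$ of each neighboring finite block decomposes as a disjoint union of such nested pieces together with possibly other finite blocks, and that this matching can be achieved using only finitely many refinements. It is here that the ultrametric dichotomy and the finiteness of the post-critical orbit must be used hand in hand to guarantee that the construction closes up after finitely many steps.
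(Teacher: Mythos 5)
The paper does not prove this proposition: it is quoted verbatim from \cite[Proposition 3.20]{hsia2025zeta} and followed immediately by \verb|\qed|, so there is no ``paper's own proof'' to compare your sketch against. The relevant machinery is imported from that reference, especially the splitting lemma (\cite[Lemma 3.9]{hsia2025zeta}) and the ``$T$-class'' construction (\cite[Definition 3.15]{hsia2025zeta}), which the present paper only invokes later in Sections 4.2 and 4.3 when building the transfer operator. What you can check internally is whether your picture of the decomposition is consistent with how the paper \emph{uses} it, and it roughly is: the description you give of infinite blocks as a small disc near a periodic post-critical point, pushed around by a scaling map accumulating at that point, matches the paper's later paraphrase (``Each $T$-class consists of either a single disc on $J(f)$, or a set of infinitely many disjoint discs in $J(f)$, which are sent to one another by a scaling map $T$ fixing some point $x$'').

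That said, your sketch does have genuine gaps that you would need to close before it could be called a proof. In the hyperbolic case, your refinement step --- ``adding further sub-discs of $f(D_i)$ meeting $J(f)$'' --- produces progressively smaller discs, and nothing you have said prevents this from running forever; you assert that ``uniform expansion together with compactness of $J(f)$ ensures that this refinement terminates'' but give no reason. In fact the cleaner route is to avoid sub-discs entirely: choose all charts of one common radius $r$ small enough; then the ultrametric dichotomy plus the uniform expansion factor $\lambda>1$ forces any $D_j$ meeting $f(D_i)$ to have $D_j\subsetneq f(D_i)$ (since the alternative $f(D_i)\subsetneq D_j$ would contradict $\operatorname{rad} f(D_i)\ge\lambda r>r$), so the cover is already Markov and no refinement occurs. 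Also, the uniform lower bound $\lambda>1$ for $f$ itself (rather than for some $f^N$) needs an argument; hyperbolicity gives expansion for a high iterate, and transferring this to $f$ requires passing to discs adapted to that iterate or invoking a bounded-distortion argument. In the subhyperbolic case you explicitly concede that you have not shown the finite/infinite interface is Markov --- that the image of a finite block meeting an infinite block is exactly a disjoint union of discs in the infinite block's orbit plus other finite blocks; this is where \cite[Lemma 3.9]{hsia2025zeta} does the real work, using the local normal form of $f$ near a critical point to split a disc into the preimages of a nested chain of discs. Without supplying that normal form and the resulting splitting, the subhyperbolic case is not proved.
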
\qed

\begin{exm}(\cite[5.2]{hsia2025zeta})
    As an example, let $p>2$ be a prime, consider the polynomial map $z\mapsto \frac{z(z-1)^2}{8p}$, the Julia set has a finite Markov decomposition with 6 blocks: $\{0\}$, $\{1\}$, and four blocks $\alpha^{\pm}$, $\beta^{\pm}$ all consisting of the union of countably infinitely many discs, and the transition graph is 
    \begin{center}
    \includegraphics[scale=1]{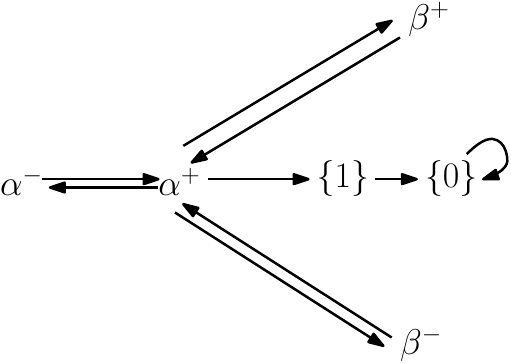}
    \end{center}
\end{exm}

\section{Levin-Sodin-Yuditski Type Identity}

\begin{proof}[Proof of Theorem \ref{lsy}]
To show this theorem, we expand both the left-hand side and the right-hand side as formal power series of $z$ and compare the coefficients. Each $\sum_{x\in \hbox{Fix}(f^n)}\frac{1}{(f^n(x))'((f^n(x))'-1)}$ on the left-hand side is a rational map of rational coefficients of all the roots of $f^n(x)=x$ as well as the coefficients of $f$ itself, and is also symmetric with respect to the roots of $f^n(x)=x$. Hence by algebra it is a rational map of rational coefficients of the coefficients of $f$ itself. Similarly, each $z^n$ coefficient on the right-hand side is a rational map of rational coefficients of the coefficients of $f$ as well. To show the left-hand side and the right-hand side are equal, one only need to show that the coefficients of $z^n$ on the left-hand side and the right-hand side are all identical as rational maps of the coefficients of $f$.

Now, replacing the field from $\mathbb{C}_p$ to $\mathbb{C}$, one sees that the $z^n$ coefficients on the left-hand side and right-hand side are exactly the same rational maps of the $N$ coefficients of $f$ as in the $\mathbb{C}_p$ case. Denote them as $C_n\in\mathbb{Q}(a_1, \dots, a_N)$ and $C'_n\in\mathbb{Q}(a_1, \dots, a_N)$, where $a_j$ are the coefficients of $f$. Since \cite{levin1991ruelle} showed that when $a_1, \dots, a_n$ are taking values in an open, hence Zariski dense subset of $\mathbb{C}^N$, we have $C_n(a_1, \dots, a_N)=C'_n(a_1, \dots, a_N)$, we have $C_n=C'_n$ for all $n$, which finished the proof.
\end{proof}

\section{\texorpdfstring{$\mathbb{C}_p$}{Cp}-valued zeta functions for rational maps on \texorpdfstring{$\mathbb{Q}_p$}{Qp}}

Now we focus on the dynamics of hyperbolic and subhyperbolic rational maps on $\mathbb{Q}_p$. The goal is to prove Theorem \ref{main2}.

\subsection{Nuclear Operators, Trace and Determinant}

Firstly, we recall some basic definitions and results in non-Archimedean functional analysis. These definitions and results can be found in \cite{schneider2013nonarchimedean}, and \cite{jiangnanjing} has their analogues in the Archimedean setting.

By a $\mathbb{C}_p$-Banach space we mean a $\mathbb{C}_p$ vector space with a complete norm $\|\cdot \|$ compatible with the norm on $\mathbb{C}_p$. For example, let $D\subseteq \mathbb{P}^1(\mathbb{C}_p)$ be an open disc, the space of $\mathbb{C}_p$ valued analytic functions on $D$, denoted as $\mathcal{A}(D)$, is a $\mathbb{C}_p$ Banach space via the $\sup$ norm $\|f\|=\sup_{z\in D}|f(z)|$.

\begin{dfn}\label{defnuclear}
A linear map $L$ between two $\mathbb{C}_p$ Banach spaces $A$ and $B$ is called {\em nuclear}, if $L=\sum_{i=1}^\infty \lambda_i y_i\otimes x_i$, where $x_i\in A^*$, $y_i\in B$, $\|x_i\|=\|y_i\|=1$, and $\lim_{i\rightarrow\infty} \lambda_i=0$. If, furthermore, $|\lambda_i|\leq O(\alpha^i)$ for some $0<\alpha<1$, we call $L$ $0$-nuclear.
\end{dfn}

\begin{rmk}\label{comp}
The compositions between the $0$-nuclear operator and the bounded operator are still $0$-nuclear, and the sums of the nuclear operators are also nuclear. Both follow immediately from the definition.
\end{rmk}

\begin{thm}[\protect{\cite[Proposition 21.1]{schneider2013nonarchimedean}}]
The {\em trace} of a $\mathbb{C}_p$ nuclear operator $L=\sum_i \lambda_i y_i \otimes x_i$ can be defined as $tr(L)=\sum_i \lambda_i y_i(x_i)$. The trace is independent of the choice of bases $\{x_i\}$. Due to non-archimedean property any nuclear operator is $0$-nuclear, hence the sum always exists.
\end{thm}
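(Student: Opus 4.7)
The plan is to treat convergence and representation-independence separately. Convergence of the series $\sum_i\lambda_i x_i(y_i)$ is immediate from the non-Archimedean property: since $\|x_i\|_{A^*}=\|y_i\|_A=1$ we have $|\lambda_i x_i(y_i)|\leq|\lambda_i|\to 0$, and in a $\mathbb{C}_p$-Banach space this alone forces unconditional convergence, with $|tr(L)|\leq\max_i|\lambda_i|$.

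For independence of the representation, I would first reduce to a vanishing statement: given two nuclear representations of the same operator $L$, subtract them to obtain (by Remark~\ref{comp}) a nuclear representation of the zero operator whose coefficients still tend to zero. It then suffices to show that whenever $\sum_i\lambda_i y_i\otimes x_i=0$ as an operator, one has $\sum_i\lambda_i x_i(y_i)=0$. For a finite rank $L=\sum_{i=1}^N\lambda_i y_i\otimes x_i$ this is classical linear algebra: the image lies in the finite-dimensional subspace $V=\mathrm{span}(y_1,\dots,y_N)$, and writing $L|_V$ as a matrix in any basis of $V$ identifies $\sum_i\lambda_i x_i(y_i)$ with the ordinary matrix trace, which depends only on $L$ and therefore vanishes if $L=0$.

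The main obstacle is the passage from finite to infinite rank. The truncations $L_N=\sum_{i\leq N}\lambda_i y_i\otimes x_i$ converge to $L$ in operator norm with error at most $\max_{i>N}|\lambda_i|\to 0$, but operator-norm convergence does not control the trace of finite-rank operators, so approximation alone cannot close the argument. I would bridge the gap by lifting the problem to the completed projective tensor product $A\,\hat\otimes\,A^*$: each nuclear representation is an element there, the proposed trace is its image under the continuous contraction $y\otimes x\mapsto x(y)$, and the operator $L$ is its image under the natural map to $\mathcal{L}(A)$. Independence of the representation then becomes the inclusion of the kernel of the latter map into the kernel of the former, which one establishes by approximating an arbitrary element of the kernel by finite-rank tensors and invoking the finite-rank case above. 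This reproduces the argument of Schneider's Proposition~21.1, which the statement cites.
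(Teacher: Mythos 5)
The paper does not actually prove this statement---it is recorded as a citation to Schneider's book and closed immediately with \qed---so there is no in-paper argument to compare against. Your convergence step is correct and complete, and the reduction to showing $\sum_i\lambda_i x_i(y_i)=0$ whenever $\sum_i\lambda_i y_i\otimes x_i=0$ as an operator, together with the finite-rank case via the ordinary matrix trace, are both fine.

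The gap is in your final step. Write $\Phi\colon A\,\hat\otimes\,A^*\to\mathrm{End}(A)$ for the natural map and $\tau\colon A\,\hat\otimes\,A^*\to\mathbb{C}_p$ for the contraction $y\otimes x\mapsto x(y)$. You take $u\in\ker\Phi$, approximate it by finite-rank tensors $u_n\to u$ in the projective tensor norm, and invoke the finite-rank case---but $u_n$ need not lie in $\ker\Phi$, so the finite-rank vanishing statement does not apply to it. You only know $\Phi(u_n)\to 0$ in operator norm and $\tau(u_n)\to\tau(u)$, and there is no bound on $|\tau|$ in terms of the operator norm on finite-rank elements; establishing such control is exactly what is at stake. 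Indeed, if this step worked as written it would prove the same claim over $\mathbb{C}$, where it is false (Enflo's failure of the approximation property). The non-Archimedean structure must enter precisely here: a $\mathbb{C}_p$-Banach space of countable type admits finite-rank projections $P_n$ of norm $\le 1$ converging strongly to the identity (for the spaces $\mathcal{A}(B_i)$ used in this paper, coordinate projections onto the first $n$ monomials do this explicitly). One then replaces $u_n$ by $(P_n\otimes\mathrm{id})u$, which is finite-rank, lies in $\ker\Phi$ because $\Phi\bigl((P_n\otimes\mathrm{id})u\bigr)=P_n\circ\Phi(u)=0$, and converges to $u$ in the projective tensor norm, so that $\tau(u)=\lim_n\tau\bigl((P_n\otimes\mathrm{id})u\bigr)=0$. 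Supplying that orthogonality/approximation-property input---which is the substance of Schneider's Proposition 21.1---is what your sketch omits.
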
\qed

\begin{dfn}
Let $L$ be a $\mathbb{C}_p$ nuclear operator, define $\det(I-tL)$ as
\[\exp\left(\sum_{n=1}^\infty \frac{tr(L^n)z^n}{n}\right)=\frac{1}{\det(I-zL)}\]
\end{dfn}

\begin{thm}\label{entire}
If $L$ is $0$-nuclear, then $\det(I-zL)$ is an entire function on $\mathbb{C}_p$.
\end{thm}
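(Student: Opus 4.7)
The plan is to bound the coefficients of $\det(I-zL)$ directly, via a Fredholm-type expansion in terms of principal minors, rather than via the exp/trace definition, which is not amenable to a direct attack because of the small convergence radius of the $p$-adic exponential.

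First, I would invoke the standard Fredholm identity for nuclear operators (see \cite[Ch.~V]{schneider2013nonarchimedean}): if $L = \sum_i \lambda_i y_i \otimes x_i$ is nuclear, then
\[
\det(I-zL) = \sum_{k=0}^\infty (-1)^k e_k(L)\, z^k,
\]
where $e_k(L) = \sum_{i_1 < \cdots < i_k} \det\bigl(\lambda_{i_a} x_{i_a}(y_{i_b})\bigr)_{a,b=1}^k$ is the sum of the $k \times k$ principal minors of the infinite matrix $M_{ij} = \lambda_i x_i(y_j)$. This is equivalent to the exp/trace definition via Newton's identities relating power sums to elementary symmetric functions of the ``eigenvalues'' of $L$.

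Next, I would apply the non-Archimedean Hadamard inequality $|\det A| \leq \prod_a \max_b |A_{ab}|$ to each principal minor. Combined with $|x_i(y_j)| \leq \|x_i\|\|y_j\| = 1$, this yields
\[
\bigl|\det M_S\bigr| \leq \prod_{l=1}^k |\lambda_{i_l}|
\]
for every $k$-subset $S = \{i_1 < \cdots < i_k\}$. Since in the ultrametric setting the coefficient $e_k$ is bounded by the maximum (not the sum) of $|\det M_S|$ over $|S|=k$, plugging in the $0$-nuclear bound $|\lambda_i| \leq C\alpha^i$ and maximizing over $S$ (the maximum is attained at $S = \{1,\ldots,k\}$) gives
\[
|e_k(L)| \leq C^k \alpha^{k(k+1)/2}.
\]
Hence $|e_k|^{1/k} \leq C\alpha^{(k+1)/2} \to 0$, so the Fredholm series has infinite radius of convergence and $\det(I-zL)$ is analytic on all of $\mathbb{C}_p$.

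The main obstacle is really the first step, namely identifying the exp/trace definition with the Fredholm expansion. A direct attack on $\exp(-\sum tr(L^n)z^n/n)$ founders both on the tiny $p$-adic convergence disc of $\exp$ and on the fact that $|tr(L^n)|$ can only be controlled by something like $(C\alpha)^n$, which is far from the super-exponential decay $\alpha^{k(k+1)/2}$ needed to cover $\mathbb{C}_p$. Once the Fredholm identity is invoked, however, the estimate is a clean consequence of ultrametric Hadamard and geometric decay of the $\lambda_i$.
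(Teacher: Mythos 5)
Your proof is correct, and it takes a genuinely cleaner route than the paper's. The paper truncates $L$ to the finite-rank operators $L_m=\sum_{i\le m}\lambda_i y_i\otimes x_i$, uses ordinary linear algebra to identify the exp/trace series with $1/\det(I-zL_m)$, and then asserts that the tail differences $\det(I-zL_{m+1})-\det(I-zL_m)$ are $O(\alpha^m)$ for each fixed $z$, from which it concludes the limit is entire; the super-exponential decay of the Fredholm coefficients is never written down, and the convergence argument is stated pointwise rather than locally uniformly. You instead work directly with the Fredholm expansion $\det(I-zL)=\sum_k(-1)^k e_k(L)z^k$ and apply the ultrametric Hadamard inequality to each principal minor, obtaining the explicit bound $|e_k(L)|\le C^k\alpha^{k(k+1)/2}$, whence $|e_k|^{1/k}\to 0$ and the radius of convergence is infinite. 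This makes visible exactly the quantitative decay (quadratic in the exponent) that the paper's $O(\alpha^m)$ phrasing leaves implicit, and it sidesteps any discussion of pointwise versus locally uniform convergence. Your observation about the limited convergence disc of the $p$-adic exponential is also well taken: it is indeed why one must pass through the Fredholm or truncation picture rather than estimate $\exp\bigl(-\sum_n \operatorname{tr}(L^n)z^n/n\bigr)$ head-on. The only caveat is that the Fredholm identity and Hadamard inequality should be cited with care from Schneider (the paper only cites Proposition~21.1 for the trace), but both are standard in the non-Archimedean literature, and your argument is sound.
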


\begin{proof}
    Let $L_m=\sum_{i=1}^m\lambda_iy_i\otimes x_i$. Then $\det(I-zL_m)$ is a degree $m$ polynomial, and by linear algebra we have
    \[\exp\left(\sum_{n=1}^\infty \frac{tr(L_m^n)z^n}{n}\right)=\frac{1}{\det(I-zL_m)}.
    \]
    Take the limit $m\rightarrow\infty$, we get that $\det(I-zL)=\lim_{m\rightarrow\infty}\det(I-zL_m)$ where the convergence is in the sense of coefficients.
    
    For any $z\in\mathbb{C}_p$, as $m\to\infty$,
    \[\det(I-zL_{m+1})-\det(I-zL_m)=O(\alpha^m)\]
    for some $0<\alpha<1$. Hence $\{\det(I-zL_m)\}$ converges to an entire function on $\mathbb{C}_p$.
\end{proof}

\subsection{Transfer Operator and Zeta Function, Hyperbolic Case}\label{hypzeta}

Let $\psi$ be a nowhere vanishing locally analytic function on $\mathbb{P}^1(\mathbb{Q}_p)$. Recall that for any disc $B$ in $\mathbb{P}^1(\mathbb{C}_p)$, we let $\mathcal{A}(B)$ be the space of $\mathbb{C}_p$-valued analytic functions on $B$.

When $f$ is hyperbolic, by the proof of Proposition \ref{markov}, in particular the splitting lemma \cite[Proposition 3.20]{hsia2025zeta}, the Julia set $J(f)$ has a finite Markov decomposition by its intersections with finitely many disjoint discs $D_1, \dots, D_n$ in $\mathbb{P}^1(\mathbb{Q}_p)$, such that $f$ when restricted to each $D_i$ is a uniform scaling and $\psi$ is analytic on each $D_i$. Each $D_i$ is the intersection of some open disc $B_i\in\mathbb{P}^1(\mathbb{C}_p)$ with $\mathbb{P}^1(\mathbb{Q}_p)$. By adjusting the radius, we can ensure that:
\begin{equation}\label{adj1}
    \text{If }B_j\subseteq f(B_i)\text{ then }\overline{B_j}\subseteq f(B_i)
\end{equation}

\begin{dfn}\cite{ruelle2002dynamical, jiangnanjing}
We define the {\em transfer operator} of weight $\psi$, on $\prod_i \mathcal{A}(B_i)$, as
\[\mathcal{L}\phi(z)=\sum_{f(y)=z}\psi(y)\phi(y)\]
\end{dfn}

\begin{lem}\label{restnucl}
Let $B$, $B'$ be two open discs centered at $0$, the radius of $B'$ is smaller than the radius of $B$, then the restriction map $i: \mathcal{A}(B)\rightarrow\mathcal{A}(B')$ is $0$-nuclear.
\end{lem}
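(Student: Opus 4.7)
The plan is to construct a nuclear expansion directly from the Taylor series at the common center $0$ of $B$ and $B'$. Writing $r$ and $r'$ for the respective radii, any $f\in\mathcal{A}(B)$ has a unique representation $f(z)=\sum_{n\geq 0}a_n z^n$, and by the non-Archimedean maximum principle the sup norm equals the Gauss norm: $\|f\|_B=\sup_n|a_n|r^n$. Consequently the coefficient functionals $\phi_n\colon f\mapsto a_n$ lie in $\mathcal{A}(B)^*$ with $\|\phi_n\|=r^{-n}$, while in the target $\|z^n\|_{B'}=(r')^n$.

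With these ingredients the restriction formally reads
\[ i(f)=\sum_{n\geq 0}\phi_n(f)\,z^n, \]
which already displays $i$ as a series of rank-one operators $z^n\otimes\phi_n$ of norm $(r'/r)^n$. To match Definition~\ref{defnuclear}, I would rescale: pick $c_n,d_n\in\mathbb{C}_p^{\times}$ with $|c_n|$ equal to (or arbitrarily close to) $r^n$ and $|d_n|$ equal to (or close to) $(r')^n$, using density of $|\mathbb{C}_p^{\times}|=p^{\mathbb{Q}}$ in $\mathbb{R}_{>0}$. Setting $x_n=c_n\phi_n$, $y_n=z^n/d_n$ and $\lambda_n=d_n/c_n$ then gives unit-norm $x_n,y_n$ (exactly, or up to arbitrarily small error) together with $|\lambda_n|\leq C(r'/r)^n$ for a universal constant $C$. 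Since $r'/r<1$, any $\alpha\in(r'/r,1)$ witnesses $|\lambda_n|=O(\alpha^n)$, so $i=\sum_n\lambda_n\,y_n\otimes x_n$ is $0$-nuclear. Convergence of the series in operator norm is automatic because $|\lambda_n|\to 0$ and non-Archimedean series converge iff their terms do.

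The only subtle point is the normalization clause $\|x_n\|=\|y_n\|=1$ when $r$ or $r'$ does not lie in $|\mathbb{C}_p^{\times}|$, which is precisely why I invoke density of the value group above; a slightly enlarged $\alpha$ absorbs the resulting error harmlessly. In the paper's intended applications the radii come from Markov decompositions inside $\mathbb{P}^1(\mathbb{Q}_p)$, so $r,r'\in|\mathbb{Q}_p^{\times}|\subset|\mathbb{C}_p^{\times}|$ and the normalization is exact. Apart from this bookkeeping the argument is essentially the standard proof that restriction between nested Tate algebras is completely continuous, transposed to the $0$-nuclear framework of Definition~\ref{defnuclear}.
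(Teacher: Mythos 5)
Your argument is correct and follows essentially the same route as the paper's: decompose the restriction via the Taylor expansion at the common center, pair the coefficient functionals with the monomials in the target, and observe that the resulting $|\lambda_n|$ decay like $(r'/r)^n$. The paper's version simply normalizes $B$ to the unit disc first and implicitly assumes an $r\in\mathbb{C}_p$ with $|r|$ equal to the radius of $B'$ exists; your remark invoking density of $|\mathbb{C}_p^\times|$ addresses the case where the radii lie outside the value group, a minor point the paper leaves unstated.
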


\begin{proof}
    Without loss of generality, assume that $B$ is the unit disc, let $r\in\mathbb{C}_p$ such that $|r|$ equals the radius of $B'$. Then we can let $y_i=(x/r)^i$, $x_i=(x^i)^*$ (here $\{(x^i)^*\}$ is the dual basis of $\{x^i\}$), $\lambda_i=r^i$, $\alpha=|r|<1$ in Definition \ref{defnuclear}.
\end{proof}

\begin{thm}\label{zetadet}
The transfer operator for a hyperbolic map $f$ on $\mathbb{P}^1(\mathbb{Q}_p)$ is 0-nuclear, and
\begin{equation}\label{releq}
    \frac{1}{\det(I-z\mathcal{L})}=\exp\left(\sum_{n=1}^\infty \left(\sum_{x\in \hbox{Fix}(f^n)\cap J(f)}\frac{\prod_{i=0}^{n-1}\psi(f^i(x))}{1-((f^n)'(x))^{-1}}\right)\frac{z^n}{n}\right)
\end{equation}
\end{thm}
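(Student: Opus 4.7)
The plan has two parts: verify $0$-nuclearity by decomposing $\mathcal{L}$ along the edges of the Markov graph, and compute $tr(\mathcal{L}^n)$ using the power-series trace formula for weighted composition operators. The formula (\ref{releq}) then follows at once from the definition $1/\det(I-z\mathcal{L}) = \exp\left(\sum_n tr(\mathcal{L}^n)z^n/n\right)$.

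For $0$-nuclearity, the idea is to decompose $\mathcal{L}$ into its blocks. Since $f|_{B_i}$ is an affine scaling, for each pair $(i,j)$ with $B_j \subseteq f(B_i)$ there is a well-defined analytic inverse branch $g_{ij}: B_j \to B_i$; the corresponding block $\mathcal{L}_{ij}: \mathcal{A}(B_i) \to \mathcal{A}(B_j)$ of $\mathcal{L}$ is $\phi \mapsto (\psi\phi) \circ g_{ij}$. Condition (\ref{adj1}) forces $\overline{g_{ij}(B_j)}$ to lie strictly inside $B_i$, so $\mathcal{L}_{ij}$ factors as the restriction $\mathcal{A}(B_i) \to \mathcal{A}(g_{ij}(B_j))$ (which is $0$-nuclear by Lemma \ref{restnucl}) followed by a bounded composition-and-multiplication operator. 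Remark \ref{comp} then gives that each $\mathcal{L}_{ij}$, and hence the finite sum $\mathcal{L}$, is $0$-nuclear.

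For $tr(\mathcal{L}^n)$, the strategy is to expand $\mathcal{L}^n$ as a finite sum of compositions $\mathcal{L}_{i_0 i_1} \cdots \mathcal{L}_{i_{n-1} i_n}$ over length-$n$ Markov paths; only closed paths ($i_0 = i_n$) contribute to the trace. For such a closed loop $\gamma$ the corresponding diagonal block on $\mathcal{A}(B_{i_0})$ is a weighted composition operator $T_\gamma \phi(z) = W_\gamma(z)\phi(G_\gamma(z))$, where $G_\gamma$ is the inverse branch of $f^n$ tracing $\gamma$; by (\ref{adj1}) $G_\gamma$ is a strict contraction, hence has a unique fixed point $x_\gamma \in B_{i_0}$ that lies in $\hbox{Fix}(f^n) \cap J(f)$. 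Expanding $T_\gamma$ in the topological basis $\{(z-x_\gamma)^k\}_{k\geq 0}$ yields a triangular matrix with diagonal entries $W_\gamma(x_\gamma)\,G_\gamma'(x_\gamma)^k$, so $tr(T_\gamma) = W_\gamma(x_\gamma)/(1-G_\gamma'(x_\gamma))$, which equals $\prod_{i=0}^{n-1}\psi(f^i(x_\gamma))/(1-((f^n)'(x_\gamma))^{-1})$. Summing over closed loops then produces the inner sum in (\ref{releq}).

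The main obstacle I anticipate is the combinatorial bookkeeping at the end: verifying that length-$n$ closed Markov loops are in bijection with $\hbox{Fix}(f^n) \cap J(f)$ without over- or under-counting. This should follow from Proposition \ref{markov}, since the $D_i$ are pairwise disjoint and $f$ is injective on each, so the itinerary uniquely pins down a periodic point. A subsidiary technicality is confirming that the power-series basis computation above matches the intrinsic trace of Definition \ref{defnuclear}; this is taken care of by the basis-independence stated in \cite[Proposition 21.1]{schneider2013nonarchimedean}.
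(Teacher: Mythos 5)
Your proposal is correct and follows essentially the same route as the paper: decompose $\mathcal{L}$ into branch operators $\mathcal{L}_{ij}$ factoring through a strict restriction (so $0$-nuclearity follows from condition \eqref{adj1}, Lemma \ref{restnucl}, and Remark \ref{comp}), then compute $tr(\mathcal{L}^n)$ by summing over closed admissible itineraries, expanding each weighted composition operator in a power basis centered at its unique fixed point to obtain the geometric series $\prod\psi/(1-((f^n)')^{-1})$, and finally invoking the itinerary--periodic-point bijection. The only cosmetic difference is that the paper normalizes the basis to $\{((z-x_{i,a})/r_i)^m\}$ so the $y_i$ have unit norm, but as you note the basis-independence of the trace makes this immaterial.
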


\begin{proof}(Analogous to \cite[Chapter 2]{jiangnanjing})
To show that $\mathcal{L}$ is nuclear, note that it is a finite sum of operators of the form 
$
r_{ij}: \mathcal{A}(B_j)\rightarrow \mathcal{A}(B_i),
$
$$
r_{ij}\phi(z)=\psi((f|_{B_i})^{-1}(z))\phi((f|_{B_i})^{-1}(z)).
$$
Hence by Statement \eqref{adj1}, Lemma \ref{restnucl} and Remark \ref{comp} it is $0$-nuclear.

To show Equation~\eqref{releq}, let $\mathcal{S}_{n, i}$ be $(n+1)$-sequences $(a_0,\dotsm a_n)$ starting and ending at $i$ ($a_0=a_n=i$), such that $B_{a_j}\subseteq f(B_{a_{j-1}})$. Then 
\[\mathcal{L}^n=\sum_i\sum_{(a_j)\in \mathcal{S}_{n, i}}r_{a_0a_1}\circ\dots\circ r_{a_{n-1}a_n}\]
For each $(a_j)\in\mathcal{S}_{n, i}$, let $x_{i, a}$ be the unique fixed point of $f|_{B_{a_{n-1}}}\circ \dots \circ f|_{B_{a_0}}$, and pick some $x_{j, a}\in B_j$ for each $j\not=i$. Then, for each $B_i$ we pick a basis $\{((z-x_i)/r_i)^m\}$ consisting of elements of unit length, take a union we get a basis of $\prod_i \mathcal{A}(B_i)$. Now the trace of $r_{a_1a_2}\circ\dots\circ r_{a_{n-1}a_n}$ under this basis equals
\[\sum_{m=0}^\infty \left(((z-x_{i, a})/r_i)^m\right)^*\left(r_{a_1a_2}\circ\dots\circ r_{a_{n-1}a_n}\right)\left(((z-x_{i, a})/r_i)^m\right)\]
\[=\sum_{m=0}^\infty \left(((z-x_{i, a})/r_i)^m\right)^*\left(\prod_{j=0}^{n-1}\psi\left(\left(f|_{B_{a_{n-1}}}\circ \dots \circ f|_{B_{a_j}}\right)^{-1}(z)\right)\right)\]
\[\left(\left(f|_{B_{a_{n-1}}}\circ \dots \circ f|_{B_{a_0}}\right)^{-1}(z)-x_i\right)^m/(r_i^m)\]
\[=\left(\prod_{j=0}^{n-1}\psi\left(\left(f|_{B_{a_{n-1}}}\circ \dots \circ f|_{B_{a_j}}\right)^{-1}(x_{i, a})\right)\right)\sum_{m=0}^\infty \left(\left(f|_{B_{a_{n-1}}}\circ \dots \circ f|_{B_{a_j}}\right)'(x_{i, a})\right)^{-m}\]
\[=\frac{\prod_{i=0}^{n-1}\psi(f^i(x_{i, a}))}{1-((f^n)'(x_{i, a}))^{-1}}\]

Now Equation \eqref{releq} follows from the fact that there is a one-to-one correspondence between $\hbox{Fix}(f^{n})\cap J(f)$ and the set of $(n+1)$-sequences in $\bigcup_i \mathcal{S}_{n, i}$.
\end{proof}

\begin{proof}[Proof of Theorem \ref{main2} for hyperbolic maps]
Now Theorem \ref{main2} for hyperbolic $f$ follows from the theorem above and Theorem \ref{entire}.
\end{proof}

\begin{rmk}
Because $\mathbb{C}_p$ is not an ordered field, there is no version of the Ruelle-Perron-Frobenuous theorem (refer to~\cite{jiangmax,jiangnanjing}) for our setting. In particular, there may be more than one leading eigenvalues or eigenvectors for $\mathcal{L}$. To see this, let $\psi\equiv 1$, make four discs in $\mathbb{Q}_p$ of identical size, define linear functions on them sending the first two to their union, the other two to their union as well, such that the derivative on the first and third disc and the second and fourth discs are the same. Now apply the gluing procedure as in \cite[Section 4.2.4]{hsia2025zeta} (which is based on \cite{nopal2025gluing}), we get that $\det(I-z\mathcal{L})$ can be decomposed to two factors corresponding to the first two, and the third and fourth disc, whose coefficients are very close to one another, which result in at least two leading eigenvectors. 
\end{rmk}

\subsection{Subhyperbolic maps}\label{secsubhyp}

\begin{proof}[Proof of Theorem \ref{main2} for subhyperbolic maps]

Now we deal with the case when $f$ is subhyperbolic. By the description of the Markov decomposition in \cite{hsia2025zeta}, for example \cite[Definition 3.15, Proposition 3.20]{hsia2025zeta}, where each Markov block is called ``the closure of the union of a $T$-class'' in the language of \cite{hsia2025zeta}. Each $T$-class consists of either a single disc on $J(f)$, or a set of infinitely many disjoint discs in $J(f)$, which are sent to one another by a scaling map $T$ fixing some point $x$. In the latter case we call it an infinite block, and $x$ is called the center.

For each infinite block $C_j$, if the center $a_j$ of the corresponding scaling map is a critical point which is not in the forward image of some other critical point, then keep it, and pick a disc $B_j$ centered at $a$ whose radius is slightly larger than the radius of this block. If not, let $C'_j$ be one of its preimage in the earliest critical point (the critical point not in the forward orbit of any other critical point) in the backward path of $a_j$, and define $B_j$ similarly with $C_j$ replaced by $C'_j$. For finite blocks we define $B_j$ as in Section \ref{hypzeta}. Following \cite{baladi2002dynamical} we call $\{B_j\}$ the {\rm dynamical charts}. For each $B_j$, let $\mathcal{A}(B_j)$ be the space of $\mathbb{C}_p$-valued analytic functions on $B_j$, and define the transfer operator on $\prod_i \mathcal{A}(B_i)$ defined as
\[\mathcal{L}_\beta\phi(z)=\sum_{g(y)=z}\psi(y)(g'(y))^{-\beta}\phi(y)\]
Here $\beta>0$ is a real number.
Now by the definition of the dynamical charts, the weight function $\psi(y)(g'(y))^{-\beta}$ is always regular, hence by adjusting the radius appropriately we can write it as a linear combination of $0$-nuclear operators, which makes $\mathcal{L}_\beta$ $0$-nuclear. Hence, by Theorem \ref{entire} when $\psi$ is bounded and locally analytic on the disjoint union of $B_j$,
\[\frac{1}{\exp\left(\sum_{n=1}^\infty \frac{tr(\mathcal{L}_\beta^n)z^n}{n}\right)}=\det(I-z\mathcal{L}_\beta)\]
is an entire function.

To relate $\exp\left(\sum_{n=1}^\infty \frac{tr(\mathcal{L}_\beta^n)z^n}{n}\right)$ with the dynamical $\zeta$ function as in the right hand side of Equation \eqref{releq} in the statement of Theorem \ref{zetadet}, one observes that by \cite[Definition 3.15]{hsia2025zeta}, any $(n+1)$-sequence $(i_0, \dots, i_n)$ such that  $C_{i_{j+1}}\subseteq f(C_{i_j})$ must be in one of the following three cases:
\begin{enumerate}
\item All blocks $C_{i_j}$ are finite. Then such a sequence corresponds to a unique periodic point of period $n$.
\item All blocks $C_{i_j}$ are infinite and the transition maps are all of degree $1$. Such a sequence corresponds to a repelling periodic point of period $n$ which lies in the forward orbit of some critical point. There are only finitely many such periodic point under the subhyperbolic assumption. 
\item Some blocks $C_{i_j}$ are infinite and some are finite. Such a sequence corresponds to a unique periodic point of period $n$ as well.
\end{enumerate}

It is easy to see that all periodic points of period $n$ are accounted for by some admissible sequence, and except for those in the second case, they all correspond to a unique sequence.

Let $\mathcal{P}$ be the indices of the finitely many infinite pieces centered at the finitely many periodic points in the forward orbit of the critical points, and let $\mathcal{P}'$ be the set of these periodic points. For every $l\in \mathcal{P}$ or $a\in\mathcal{P}'$, let $d_l$ be the degree of the dynamical coordinate of $B_l$, let $n_l$ (or $n_a$) be the shortest period. For each period in $\mathcal{P}$ or $\mathcal{P'}$, pick an unique representative, put them together we get $\mathcal{Q}$ and $\mathcal{Q}'$ respectively. By the same argument as the proof of Equation \ref{releq} above, while accounting for the multiple counting due to the second case, we see that
\begin{equation}\label{eqsubhyp}
\begin{aligned}
&\exp\left(\sum_{n=1}^\infty \left(\sum_{x\in \hbox{Fix}(f^n)\cap J(f)}\frac{\prod_{i=0}^{n-1}\psi(f^i(x))}{1-((f^n)'(x))^{-1}}\right)\frac{z^n}{ n}\right)\\
=&\frac{1}{\det(I-z\mathcal{L}_\beta)}\cdot\frac{\exp\left(\sum_{n=1}^\infty \left(\sum_{a\in\mathcal{P}', n_a|n}\frac{\prod_{i=0}^{n-1}\psi(f^i(a))}{((f^n)'(a))^{\beta}(1-((f^n)'(a))^{-1})}\right)\frac{z^n}{ n}\right)}{\exp\left(\sum_{n=1}^\infty \left(\sum_{l\in\mathcal{P}, n_l|n}\frac{\prod_{i=0}^{n-1}\psi(f^i(a_l))}{((f^n)'(a_l))^{\beta/d_l}(1-((f^n)'(a_l))^{-1/d_l})}\right)\frac{z^n}{n}\right)}
\end{aligned}
\end{equation}
and the extra factor
\[\frac{\exp\left(\sum_{n=1}^\infty \left(\sum_{a\in\mathcal{P}', n_a|n}\frac{\prod_{i=0}^{n-1}\psi(f^i(a))}{((f^n)'(a))^{\beta}(1-((f^n)'(a))^{-1})}\right)\frac{z^n}{ n}\right)}{\exp\left(\sum_{n=1}^\infty \left(\sum_{l\in\mathcal{P}, n_l|n}\frac{\prod_{i=0}^{n-1}\psi(f^i(a_l))}{((f^n)'(a_l))^{\beta/d_l}(1-((f^n)'(a_l))^{-1/d_l})}\right)\frac{z^n}{n}\right)}\]
\[=\frac{\prod_{a\in\mathcal{P'}}\exp\left(\sum_{n=1}^\infty \frac{\left(\prod_{i=0}^{n_a-1}\psi(f^i(a))\right)^n}{((f^{n_a})'(a))^{n\beta}(1-((f^{n_a})'(a))^{-n})}\frac{z^{nn_a}}{nn_a}\right)}{\prod_{l\in\mathcal{P}}\exp\left(\sum_{n=1}^\infty \frac{\left(\prod_{i=0}^{n_l}\psi(f^i(a_l))\right)^n}
{((f^{n_l})'(a_l))^{n\beta/d_l}(1-((f^{n_l})'(a))^{-n/d_l})}\frac{z^{nn_l}}{nn_l}\right)}\]
\[=\prod_{j=0}^\infty\frac{\prod_{a\in\mathcal{Q'}}\exp\left(\sum_{n=1}^\infty \left(\prod_{i=0}^{n_a-1}\psi(f^i(a))\right)^n((f^{n_a})'(a))^{-n\beta-nj}\frac{z^{nn_a}}{n}\right)}{\prod_{l\in\mathcal{Q}}\exp\left(\sum_{n=1}^\infty \left(\prod_{i=0}^{n_l}\psi(f^i(a_l))\right)^n((f^{n_l})'(a_l))^{-n(\beta+j)/d_l}\frac{z^{nn_l}}{n}\right)}\]
\[=\prod_{j=0}^\infty\frac{\prod_{l\in\mathcal{Q}}\left(1-\left(\prod_{i=0}^{n_l}\psi(f^i(a_l))\right)((f^{n_l})'(a_l))^{-(\beta+j)/d_l}z^{n_l}\right)}
{\prod_{a\in\mathcal{Q'}}\left(1-\left(\prod_{i=0}^{n_a-1}\psi(f^i(a))\right)((f^{n_a})'(a))^{-\beta-j}z^{n_a}\right)}\]
is a ratio between entire functions (c.f. \cite[Equation (1.5)]{baladi2002dynamical}). Here 
$$
((f^{n_l})'(a_l))^{-\frac{\beta+j}{d_l}}
$$ 
is the derivative of the corresponding $g^{n_l}$ at $a_l$, where $g$ is defined by patching the $g_{ij}$ defined using the dynamical charts.

Now the conclusion of Theorem \ref{main2} follows from Equation \eqref{eqsubhyp} and the fact that $1/\det(I-z\mathcal{L}_\beta)$ is an entire function.
\end{proof}

\section{Transfer operator for real valued weights, Hausdorff dimension}

We can also do the classical Ruelle-Perron-Frobenuous transfer operator, with weight $|f'|^{-\beta}$, then this weight function is locally constant everywhere except for the singular points. In addition, if one considers the dynamical chart in Section \ref{secsubhyp}, then the weight function is constant on each Markov block. Hence, under the dynamical charts, $\det(I-t\mathcal{L}_\beta)$ is the characteristic polynomial of a finite matrix whose entries are either $0$ or powers of $\alpha^\beta$ where $\alpha=\|p\|$. Hence, we have:

\begin{thm}\label{dim}
When $f$ is a subhyperbolic rational map on $\mathcal{P}^1(\mathbb{Q}_p)$, and $0<\alpha<1$ such that $\|p\|=\alpha$, then the Hausdorff dimension of $J(f)$ (under the projective metric of $\mathcal{P}^1(\mathbb{Q}_p)$) is of the form $\log(\lambda)/\log(\alpha)$, where $\lambda$ is an algebraic number.
\end{thm}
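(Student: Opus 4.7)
The plan is to reduce the computation of $\dim_H J(f)$ to a finite-dimensional linear algebra problem whose defining equation is polynomial in $\alpha^\beta$, and then to extract algebraicity. I would begin by fixing the dynamical charts $\{B_i\}$ from Section \ref{secsubhyp}. On each chart $f$ acts as a uniform scaling (composed with a $d_l$-th power in the case of an infinite block centred at a critical periodic point), so $|f'|$ is constant on each Markov block and equal to $\alpha^{-k_i}$ for some $k_i \in \tfrac{1}{D}\mathbb{Z}_{>0}$, where $D$ is the least common multiple of the local degrees $d_l$. In particular the weight $|f'|^{-\beta}$ is constant on each block, so as indicated in the paragraph preceding the theorem, $\det(I - t\mathcal{L}_\beta)$ coincides with the characteristic polynomial of a finite transition matrix $M(\beta)$ whose entries are either $0$ or of the form $\alpha^{k\beta}$ with $k \in \tfrac{1}{D}\mathbb{Z}_{>0}$. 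Setting $\mu := \alpha^{\beta/D}$, the matrix $M(\beta)$ then has entries in $\mathbb{Z}_{\geq 0}[\mu]$ and $\det(I - M(\beta))$ is a polynomial in $\mu$ with integer coefficients.

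Next I would invoke a Moran--Bowen type formula, characterising $\dim_H J(f)$ as the unique $\beta > 0$ at which the spectral radius of $M(\beta)$ equals $1$, equivalently at which $\det(I - M(\beta)) = 0$. Such a formula is particularly clean in the non-Archimedean setting: $J(f)$ is the nested intersection of its $n$-th level covers $\phi_\omega(B_i)$ by compositions of inverse branches, and the ultrametric inequality forces distinct $\phi_\omega(B_i)$ at a fixed generation to be either disjoint or nested, so the covers are exact with diameters multiplying to $\alpha^{\sum k_{\omega_j}}$. The Moran equation $\sum_\omega \alpha^{\beta \sum k_{\omega_j}} = 1$ is then precisely the Perron eigenvalue condition for $M(\beta)$, and a standard two-sided dimension argument (upper bound from the explicit cover, lower bound from a Bernoulli-type measure distributed according to the Perron eigenvector) identifies this $\beta$ with $\dim_H J(f)$.

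Once $\beta$ is pinned down by $\det(I - M(\beta)) = 0$, this equation is an integer polynomial equation in $\mu$; hence $\mu$ is algebraic, and so is $\lambda := \mu^D = \alpha^\beta$, giving $\beta = \log(\lambda)/\log(\alpha)$ as required. The main technical obstacle I anticipate is the careful justification of both the Moran--Bowen formula in this non-Archimedean subhyperbolic setting and of the finite-matrix reduction from the paragraph preceding the theorem, especially in the presence of infinite blocks: each such block is a nested union of balls accumulating onto a critical periodic orbit point, and the claim that its contribution collapses into finitely many matrix entries of pure-power form needs a verification parallel to the geometric-series collapse in Equation \eqref{eqsubhyp}. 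Once this and Bowen's formula are established, the algebraic conclusion is immediate.
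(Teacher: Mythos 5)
Your proposal follows essentially the same route as the paper: reduce to a finite weighted transition matrix $M(\beta)$ via the dynamical charts, observe its entries are powers of $\alpha^\beta$ (or, as you correctly refine, of $\alpha^{\beta/D}$ because of the degree-$d_l$ power maps in the infinite-block charts), and characterise $\dim_H J(f)$ as the root of $\det(I - M(\beta)) = 0$. The paper's own argument consists only of the short paragraph preceding the theorem followed by a \qed, so the Moran--Bowen step you outline (ultrametricity making level-$n$ covers exact, plus the two-sided estimate from a Perron eigenvector measure) is precisely the justification the paper leaves implicit, and your sketch of it is sound.
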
\qed

\bibliographystyle{plain}
\bibliography{refs}

\begin{thebibliography}{1}

\bibitem{baladi2002dynamical}
Viviane Baladi, Yunping Jiang, and Hans~Henrik Rugh.
\newblock Dynamical determinants via dynamical conjugacies for postcritically
  finite polynomials.
\newblock {\em Journal of Statistical Physics}, 108:973--993, 2002.

\bibitem{benedetto2019dynamics}
Robert~L Benedetto.
\newblock {\em Dynamics in one non-archimedean variable}, volume 198.
\newblock American Mathematical Soc., 2019.

\bibitem{hsia2025zeta}
Liang-Chung Hsia, Hongming Nie, and Chenxi Wu.
\newblock Zeta function and entropy for \hbox{non-Archimedean} subhyperbolic
  dynamics.
\newblock {\em arXiv preprint arXiv:2503.10018}, 2025.

\bibitem{jiangnanjing}
Yunping Jiang.
\newblock Nanjing lecture notes in dynamical systems part one: Transfer
  operators in thermodynamical formalism.
\newblock {\em FIM-Zurich Preprint}, 1998.

\bibitem{jiangmax}
Yunping Jiang.
\newblock A proof of the existence and simplicity of a maximal eigenvalue for
  \hbox{Ruelle-Perron-Frobenius} operators.
\newblock {\em Lett. Math. Phys.}, 48:211--219, 1999.

\bibitem{levin1991ruelle}
GM~Levin, ML~Sodin, and PM~Yuditski.
\newblock A \hbox{Ruelle} operator for a real julia set.
\newblock {\em Communications in Mathematical Physics}, 141(1):119--132, 1991.

\bibitem{nopal2025gluing}
V{\'\i}ctor Nopal~Coello and J~Rogelio P{\'e}rez-Buend{\'\i}a.
\newblock Gluing dynamics: $\varepsilon$-precision in solving a
  \hbox{non-Archimedean} inverse problem.
\newblock {\em Bolet{\'\i}n de la Sociedad Matem{\'a}tica Mexicana}, 31(2):52,
  2025.

\bibitem{ruelle2002dynamical}
David Ruelle.
\newblock Dynamical zeta functions and transfer operators.
\newblock {\em Notices Amer. Math. Soc.}, 49(8):887, 2002.

\bibitem{schneider2013nonarchimedean}
Peter Schneider.
\newblock {\em Nonarchimedean functional analysis}.
\newblock Springer Science \& Business Media, 2013.

\end{thebibliography}

\medskip
\medskip

\noindent Yunping Jiang: Department of Mathematics, Queens College of the City University of New York, Flushing, NY 11367-1597 and The Ph.D. Program in Mathematics, Graduate Center of the City University of New York; New York, NY 10016\\
Email: yunping.jiang@qc.cuny.edu

\medskip
\noindent Chenxi Wu: Department of Mathematics, University of Wisconsin-Madison, 480 Lincoln Drive, Madison, WI 53706, USA\\
Email: cwu367@math.wisc.edu

\end{document}